\documentclass[12pt,a4paper]{amsart}


\usepackage{latexsym} 
 
\usepackage{graphicx}
\usepackage{epsfig}
\usepackage{mathrsfs}
\usepackage{amssymb}
\usepackage{amsthm}
\usepackage{amsfonts}
\usepackage{amsmath}
\usepackage{centernot}
\usepackage{amstext}
\usepackage{amscd}
\usepackage{enumerate}
\usepackage{tikz}
\usepackage{epic}
\usepackage{eepic}
\usepackage{pstricks,pst-plot}
\usepackage{bigdelim}
\usepackage{multirow}

\setlength{\textwidth}{\paperwidth}
\addtolength{\textwidth}{-6cm}
\setlength{\textheight}{\paperheight}
\addtolength{\textheight}{-5cm}
\addtolength{\textheight}{-\headheight}
\addtolength{\textheight}{-\headsep}
\addtolength{\textheight}{-\footskip}
\setlength{\oddsidemargin}{0.5cm}
\setlength{\evensidemargin}{0.5cm}
\setlength{\topmargin}{-0.5cm}

\numberwithin{equation}{section}

\theoremstyle{plain}
\newtheorem{thm}{Theorem}[section] 
\newtheorem{prop}[thm]{Proposition}
\newtheorem{cor}[thm]{Corollary}
\newtheorem{lem}[thm]{Lemma}

\newtheorem{theorem*}{Theorem}[]

\theoremstyle{definition}
\newtheorem{defn}[thm]{Definition}

\newtheorem{example}[thm]{Example}

\theoremstyle{remark}
\newtheorem{rmk}[thm]{Remark}

\newenvironment{claim}[1]{\par\noindent\underline{Claim:}\space#1}{}
\newenvironment{claimproof}[1]{\par\noindent\underline{Proof:}\space#1}{\hfill $\square$}

\theoremstyle{property}

\newcommand{\N}{\mathbb{N}}
\newcommand{\R}{\mathbb{R}}

\DeclareMathOperator{\grad}{grad\,}

\def\accentclass@{7}
\def\makeacc@#1#2{\def#1{\mathaccent"\accentclass@#2 }}
\makeacc@\cir{017}

\newcommand{\s}{\Sigma}
\newcommand{\vp}{\varphi}

\def\dis{\displaystyle}

\def\det{\mathop{\rm det}}

\def\det{{\text {\rm det}}}

\def\vp{{\varphi}}

\title[Kuo quantity and Thom quantity]
{Equivalence of Kuo and Thom quantities \\
for analytic functions}

\author{Karim Bekka and Satoshi Koike} 

\address{Institut de recherche Mathematique de Rennes, 
Universit\'{e} de Rennes 1, Campus Beaulieu, 35042 Rennes cedex, France}
\address{Department of Mathematics, Hyogo University of Teacher Education,
Kato, Hyogo 673-1494, Japan}
\email{karim.bekka@univ-rennes1.fr}
\email{koike@hyogo-u.ac.jp} 
\subjclass[2010]{Primary 57R45 Secondary 58K40}

\keywords{Kuo condition, Thom type inequality, Kuo quanity, 
Thom quantity, sufficiency of jet}
\date{\today}

\begin{document}

\thanks{This research is partially supported by the Grant-in-Aid 
for Scientific Research (No. 26287011) of Ministry of Education, 
Science and Culture of Japan, and HUTE Short-Term Fellowship
Program 2016.}


\maketitle

\begin{abstract}
Sufficiency of jets is a very important notion introduced 
by Ren\'e Thom in order to establish the structural stability theory.
The criteria for some sufficiency of jets are known 
as the Kuo condition and Thom type inequality, 
which are defined using the Kuo quantity and Thom quantity.
Therefore these quantities are meaningful.
In this paper we show the equivalence of Kuo and Thom quantities. 
Then we apply this result to the relative conditions to a given 
closed set.
\end{abstract}

\bigskip

\section{Introduction}
Let $f : (\R^n ,0) \to (\R,0)$ be a $C^r$ function germ.
The $r$-jet of $f$ at $0 \in \R^n$, $j^r f(0)$, has a unique polynomial
representative $z$ of degree not exceeding $r$.
We do not distinguish the $r$-jet $j^r f(0)$ and the polynomial 
representative $z$ here.

\vspace{3mm}

\noindent {\bf Kuiper-Kuo condition.} There is a positive number 
$C > 0$ such that
\begin{equation*}
\| \grad f(x) \| \ge C \| x\|^{r-1} 
\end{equation*} 
holds in some neighbourhood of $0 \in \R^n.$

\vspace{3mm}

Note that the Kuiper-Kuo condition depends only of its $r$-jet 
$z = j^r f(0)$, and it is independent of the choice of representative.

The Kuiper-Kuo condition is well-known as a criterion for 
$C^0$-sufficiency and $V$-sufficiency of $z$ in $C^r$ functions 
(N. Kuiper \cite{kuiper}, T.-C. Kuo \cite{kuo1}, J. Bochnak and 
S. Lojasiewicz \cite{bochnaklojasiewicz}).
See \S \ref{preli} for the definitions of $C^0$-sufficiency 
and $V$-sufficiency of jet.

Let us recall the Kuo condition.

\vspace{3mm}

\noindent {\bf Kuo condition.} There are positive numbers 
$C, \alpha, \bar w > 0$ such that
\begin{equation*}
\| \grad f(x) \| \ge C \| x\|^{r-1} \text{ in } {\mathcal H}_{r}(f; \bar w) 
\cap \{\| x \| < \alpha\},
\end{equation*} 
where ${\mathcal H}_{r}(f;\bar w) := \{ x \in \mathbb{R}^n : |f(x)| \le 
\bar w \| x\|^{r}\}$ is the {\em horn-neighbourhood of $f^{-1}(0)$ of degree 
$r$ and width $\bar{w}$} (T.-C. Kuo \cite{kuo2}). 

\vspace{3mm}

Note that this Kuo condition is also a condition on the $r$-jet 
$z = j^r f(0)$, and it is independent of the representative $f$.

The Kuo condition is a criterion for $V$-sufficiency of $z$
in $C^r$ functions.

\vspace{3mm}

\noindent {\bf Condition ($\widetilde{K}$).}
There is a positive number $C > 0$ such that
\begin{equation*}
\| x\| \| \grad f(x) \| + |f(x)| \ge C \| x\|^r
\end{equation*} 
holds in some neighbourhood of $0 \in \R^n.$

\vspace{3mm}

This condition is the Kuo condition in a different way.
Therefore condition ($\widetilde{K}$) is also a criterion 
for $V$-sufficiency of $z$ in $C^r$ functions.

On the other hand, R. Thom formulated the following condition 
as a sufficient condition for $z$ to be $C^0$-sufficient in 
$C^r$-functions.

\vspace{3mm}

\noindent {\bf Thom type inequality.} There are positive numbers 
$K, \beta > 0$ such that
\begin{equation*}
\sum_{i<j} \left| x_i \frac{\partial f}{\partial x_j} - 
x_j \frac{\partial f}{\partial x_i} \right|^2 + |f(x)|^2 \geq K \| x\|^{2r}  
\text{ for } \| x\| < \beta.
\end{equation*}

It is shown in \cite{bekkakoike1} that Thom type inequality condition is 
equivalent to the Kuiper-Kuo condition.

Throughout this paper, we denote by $\mathbb{N}$ the set of natural 
numbers in the sense of positive integers.
Let $s \in \N \cup \{ \infty, \omega \}$, and 
let ${\mathcal E}_{[s]}(n,p)$ denote the set of
$C^s$ map-germs : $(\R^n,0)\to (\R^p,0)$.

Now we introduce the Kuo quantity $K_m$ and Thom quantity $T_m.$ 
The Thom quantity is a generalisation of the left side of Thom type 
inequality, and the Kuo quantity is a generalisation of the left side 
of a condition equivalent to condition ($\widetilde{K}$).

\begin{defn}\label{KTquantity}
Let $f\in {\mathcal E}_{[s]}(n,p)$, $n\geq p,$ and let $m \in \N$. 
Let us define two functions of the variable $x$:
\begin{equation}
K_{m}(f,x) := \| x \|^m \sum_{1\leq i_1<\ldots<i_{p}\leq n} 
\left| \det \left( \frac{D(f_1,\ldots, f_p)}{D(x_{i_1},\ldots,x_{i_{p}})}(x)
\right) \right|^m + \| f(x) \|^m
\end{equation}
\begin{equation}
T_{m}(f,x) := \sum_{1\leq i_1<\ldots<i_{p+1}\leq n} 
\left| \det \left(\frac{D(f_1,\ldots, 
f_p,\rho)}{D(x_{i_1},\ldots,x_{i_{p+1}})}(x)\right) \right|^m 
+ \| f(x) \|^m
\end{equation}
where $\rho(x)=\| x\|^2.$
Note that $T_{m}(f,x) = \| f(x) \|^m$ in the case where $n = p$.
\end{defn} 

Related to the Kuo condition and Thom type inequality, we have shown the 
following result.

\begin{thm}(\cite{bekkakoike1})
Let $r \in \N$. 
For $f \in {\mathcal E}_{[r]}(n,p)$, $n \ge p$, the following conditions are 
equivalent. 

\noindent (1) There are positive numbers $C, \alpha > 0$ such that
$K_2(f,x) \ge C \| x \|^{2r}$ for $\| x \| < \alpha .$

\noindent (2) There are positive numbers $K, \beta > 0$ such that 
$T_2(f,x) \ge K \| x \|^{2r}$ for $\| x \| < \beta .$
\end{thm}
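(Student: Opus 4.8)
The plan is to prove the two implications separately, but the real content is to compare the two sums of Jacobian minors pointwise, up to constants depending only on $n$ and $p$. Fix $x\neq 0$ and abbreviate $A(x)$ for the $p\times n$ matrix whose rows are $\grad f_1(x),\ldots,\grad f_p(x)$, and let $\rho(x)=\|x\|^2$, so that $\grad\rho(x)=2x$. For the direction (2)$\Rightarrow$(1): each $(p+1)$-minor of the matrix $\binom{A(x)}{\grad\rho(x)}$ is, by cofactor expansion along the last row, a signed combination of entries $2x_{i}$ times $p$-minors of $A(x)$; hence each such $(p+1)$-minor is bounded in absolute value by $2\|x\|$ times a sum of $p$-minors of $A(x)$. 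Taking $m=2$ and summing, one gets $T_2(f,x)\le c_{n,p}\,\|x\|^2\big(\sum|\det|^2\big)+\|f(x)\|^2$, so $K_2(f,x)\le 0$... no — one gets $T_2(f,x)\le c_{n,p}\,K_2(f,x)$, after absorbing the common $\|f(x)\|^2$ term and noting $\|x\|^2\sum|\det|^2$ is exactly the first term of $K_2$ up to the constant. Therefore $K_2(f,x)\ge C\|x\|^{2r}$ immediately yields $T_2(f,x)\ge (C/c_{n,p})\|x\|^{2r}$ on the same neighbourhood.

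The interesting direction is (1)$\Rightarrow$(2), where one must bound $\|x\|^2\sum|\det|^2$ (the $p$-minors of $A(x)$) from above by $\sum|\det|^2$ over the $(p+1)$-minors of $\binom{A(x)}{2x}$. The key linear-algebra fact is a Pythagoras/Cauchy–Binet identity: if $v_1,\ldots,v_p$ are the rows of $A(x)$ and we append the vector $x$, then
\begin{equation*}
\|x\|^2\!\!\sum_{1\le i_1<\cdots<i_p\le n}\!\!\Big|\det\nolimits_{i_1,\ldots,i_p}A(x)\Big|^2
=\sum_{1\le i_1<\cdots<i_{p+1}\le n}\!\!\Big|\det\nolimits_{i_1,\ldots,i_{p+1}}\binom{A(x)}{x}\Big|^2+\Big\|\text{(projection terms)}\Big\|^2,
\end{equation*}
more precisely: the Gram determinant of $v_1,\ldots,v_p,x$ equals $\det(\text{Gram of }v_1,\ldots,v_p)\cdot\operatorname{dist}(x,\operatorname{span}\{v_i\})^2$, and by Cauchy–Binet that Gram determinant equals $\sum|\det_{i_1<\cdots<i_{p+1}}|^2$. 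Combining, $\sum_{p+1}|\det|^2 = \big(\sum_p|\det|^2\big)\cdot\operatorname{dist}(x,\operatorname{span}\{\grad f_i(x)\})^2$. So the $(p+1)$-minor sum equals the $p$-minor sum times the squared distance from $x$ to the span of the gradients — and this is where the horn-neighbourhood geometry enters.

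Thus the implication reduces to: on the set where $K_2(f,x)\ge C\|x\|^{2r}$, one has either $\|f(x)\|^2\ge \frac{C}{2}\|x\|^{2r}$ (in which case $T_2(f,x)\ge\frac{C}{2}\|x\|^{2r}$ trivially), or else $\|x\|^2\sum_p|\det|^2\ge\frac{C}{2}\|x\|^{2r}$ \emph{and} $\|f(x)\|^2<\frac C2\|x\|^{2r}$, and in the latter case one must show $\operatorname{dist}(x,\operatorname{span}\{\grad f_i(x)\})$ is bounded below by a positive constant (times $\|x\|/\|x\|=1$, i.e. $\operatorname{dist}(x,\operatorname{span})\ge\delta\|x\|$). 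The hard part is exactly this lower bound on the distance: it is where one uses that $|f(x)|$ is small relative to $\|x\|^r$, i.e. that $x$ lies in a horn-neighbourhood of $f^{-1}(0)$, together with the Kuo-type estimate $\|x\|\sum_p|\det|\ge c\|x\|^r$ that holds there; geometrically, $x$ cannot be nearly tangent to the fibre of $f$ through it when $f$ is small and its derivative is comparably large. I expect to argue by contradiction along a curve: if no such $\delta$ exists, pick a sequence $x_k\to 0$ with $\operatorname{dist}(x_k,\operatorname{span}\{\grad f_i(x_k)\})/\|x_k\|\to 0$ while $\|f(x_k)\|/\|x_k\|^r\to 0$; use the curve selection lemma (in the semianalytic/analytic setting, $f$ being $C^r$ with an analytic bound, or after passing to the polynomial representative $z$) to get an analytic arc $x(t)$, expand $f$ and $\grad f$ in $t$, and derive that $|f(x(t))|\ge c\|x(t)\|^r$ along the arc — contradicting the choice. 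Passing from $x(t)$ back to the claimed uniform neighbourhood is the routine closing step. Once $\operatorname{dist}(x,\operatorname{span})\ge\delta\|x\|$ is established on the relevant region, the Cauchy–Binet identity gives $\sum_{p+1}|\det|^2\ge\delta^2\sum_p|\det|^2\ge\frac{\delta^2}{\|x\|^2}\cdot\frac C2\|x\|^{2r}$, but that has the wrong power of $\|x\|$ — so in fact one wants the identity with the extra $\|x\|^2$: $\|x\|^2\sum_p|\det|^2=\big(\sum_{p+1}|\det|^2\big)\cdot\|x\|^2/\operatorname{dist}^2\le \delta^{-2}\sum_{p+1}|\det|^2$ is false in general, so the correct reading is $\sum_{p+1}|\det|^2=\operatorname{dist}^2\cdot\sum_p|\det|^2\ge\delta^2\|x\|^2\sum_p|\det|^2\ge\delta^2\cdot\frac C2\|x\|^{2r}$ wait — $\operatorname{dist}\ge\delta\|x\|$ gives $\operatorname{dist}^2\ge\delta^2\|x\|^2$, hence $\sum_{p+1}|\det|^2\ge\delta^2\|x\|^2\sum_p|\det|^2$, and $\|x\|^2\sum_p|\det|^2\ge\frac C2\|x\|^{2r}$ in this case, so $T_2(f,x)\ge\delta^2\cdot\frac C2\|x\|^{2r}$ as desired. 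This completes the plan; the only genuinely nontrivial step is the curve-selection argument for the distance lower bound.
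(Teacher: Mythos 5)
Your overall architecture is reasonable, and the Gram/Cauchy--Binet identity
$\sum_{p+1}|\det|^2=\bigl(\sum_{p}|\det|^2\bigr)\cdot\bigl(\dist(x,V_x)\bigr)^2$, $V_x=\mathrm{span}\{\grad f_1(x),\ldots,\grad f_p(x)\}$ (up to the factor $4$ from $\grad\rho=2x$), is a genuinely different and rather cleaner way to organise the hard direction than the route the paper takes for its Main Theorem (curve selection followed by column operations on the minimal-order $p$-minor along the arc). The cofactor expansion giving $T_2\le c_{n,p}K_2$ pointwise is also correct --- but note that this proves $(2)\Rightarrow(1)$, whereas your closing sentence of that paragraph asserts that $K_2\ge C\|x\|^{2r}$ yields $T_2\ge (C/c_{n,p})\|x\|^{2r}$; that implication does not follow from $T_2\le c_{n,p}K_2$ and the wording must be corrected.

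The real problem is that the entire content of $(1)\Rightarrow(2)$ has been compressed into the unproved claim $\dist(x,V_x)\ge\delta\|x\|$, and both the claim's setup and its sketched proof have gaps. First, your case split leaves you, in the nontrivial case, only with $\|f(x)\|<\sqrt{C/2}\,\|x\|^{r}$ --- a horn of \emph{fixed} width --- so the contradiction you aim to derive along the arc, $|f(x(t))|\ge c\|x(t)\|^{r}$, is only a contradiction if $c>\sqrt{C/2}$, which no soft argument will deliver. The standard repair is to apply curve selection directly to the negation of $(1)\Rightarrow(2)$, so that along the resulting arc \emph{every} term of $T_2$ is $\smallo(\|\lambda(t)\|^{2r})$; then $\|f(\lambda(t))\|=\smallo(\|\lambda(t)\|^{r})$ and $\dist(\lambda(t),V_{\lambda(t)})=\smallo(\|\lambda(t)\|)$ hold simultaneously and any positive $c$ suffices. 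Second, the key estimate itself is only gestured at: for $p>1$, writing $x(t)=\sum_j a_j(t)\grad f_j(x(t))+\varepsilon(t)$ and comparing $\tfrac12\frac{d}{dt}\|x(t)\|^{2}=\langle x,x'\rangle$ with $\sum_j a_j\,\frac{d}{dt}(f_j\circ x)$ requires an upper bound on the $a_j$, hence a \emph{lower} bound on the smallest singular value of $Df(x(t))$; the hypothesis only controls $\sum_p|\det|^2$, i.e.\ the product of singular values, and extracting $\sigma_{\min}\gtrsim\|x\|^{r-1}$ (using boundedness of $Df$) plus the ensuing order bookkeeping is precisely the nontrivial work, none of which appears. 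Third, curve selection needs (sub)analytic data while $f$ is only $C^{r}$; the transfer to the polynomial representative $z$ (showing that $K_2$ and $T_2$ for $f$ and for $z$ are simultaneously $\smallo(\|x\|^{2r})$ along a sequence) is a genuine lemma, not a parenthesis. Until these three points are filled in, the proof of the essential implication is not there.
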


The main purpose of this paper is to show the equivalence of 
the Kuo quantity and Thom quantity, which is a generalisation 
of the above result in a certain sense.

\begin{thm}\label{equivKT} (Main Theorem).
Let  $f \in {\mathcal E}_{[\omega ]}(n,p)$, $n \ge p$.
Then for any $m \in \mathbb{N},$ 
$$
K_{m}(f,.)\thickapprox T_{m}(f,.).
$$
\end{thm}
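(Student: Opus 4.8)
The plan is to prove both inequalities $K_m(f,\cdot)\lesssim T_m(f,\cdot)$ and $T_m(f,\cdot)\lesssim K_m(f,\cdot)$ near $0$, using the standard trick of relating the two sums of minors by algebraic (Plücker-type) identities together with a Łojasiewicz inequality to absorb the error terms. First I would observe that the direction $T_m\lesssim K_m$ is essentially algebraic: each $(p+1)\times(p+1)$ minor $\det\big(D(f_1,\dots,f_p,\rho)/D(x_{i_1},\dots,x_{i_{p+1}})\big)$ can be expanded along the last row (the row of $\partial\rho/\partial x = 2x$), writing it as a linear combination, with coefficients $\pm 2x_{i_k}$, of the $p\times p$ minors of the Jacobian of $(f_1,\dots,f_p)$. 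Hence $|(p+1)\text{-minor}| \le C\|x\|\cdot(\text{sum of }p\text{-minors})$ pointwise, and raising to the $m$-th power and summing gives $T_m(f,x)\le C'\,K_m(f,x)$ directly, with no analyticity needed. (When $n=p$ this is trivial since $T_m=\|f\|^m \le K_m$.)

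The substantive direction is $K_m\lesssim T_m$, i.e. bounding $\|x\|^m\sum|\text{$p$-minor}|^m$ by $C\,T_m(f,x)$ near $0$. The geometric idea is that $K_m$ is (up to the $\|f\|^m$ term) the size of $\|x\|\cdot\mathrm{grad}$-type data, while $T_m$ measures the component of that gradient data tangent to the spheres $\rho=\mathrm{const}$; the only thing $T_m$ can miss is the radial component, but the radial derivative of $f$ is controlled by $f$ itself along rays, hence by $\|f\|^m$. Concretely, for a single function ($p=1$) one has the classical identity $\|x\|^2\|\mathrm{grad} f\|^2 = \sum_{i<j}|x_i\partial_j f - x_j\partial_i f|^2 + \langle x,\mathrm{grad} f\rangle^2$; the first term is exactly the $p=1$ Thom sum, and I would bound the radial term $\langle x,\mathrm{grad} f\rangle$ using the Łojasiewicz-type inequality that, for analytic $f$ with $f(0)=0$, $|\langle x,\mathrm{grad} f(x)\rangle|$ and $|f(x)|$ are comparable up to lower-order corrections — more precisely I would use the curve selection lemma / Łojasiewicz inequality to get $\|x\|\cdot|\langle x,\mathrm{grad} f\rangle| \le C(|f(x)| + \|x\|\cdot(\text{tangential part}))$ in a neighbourhood of $0$. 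For general $p$ I would run the same decomposition simultaneously on all columns $(f_1,\dots,f_p)$: expand a generic $p\times p$ minor of $Df$ in terms of the $(p+1)\times(p+1)$ minors of $D(f_1,\dots,f_p,\rho)$ plus terms involving the "radial parts" $\langle x,\mathrm{grad} f_i\rangle$, and then control those radial parts by $\|f\|$ via a Łojasiewicz inequality applied to the analytic map $x\mapsto (f(x), \langle x, \mathrm{grad} f_1(x)\rangle,\dots,\langle x,\mathrm{grad} f_p(x)\rangle)$ relative to its zero set, which contains $f^{-1}(0)$.

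The key algebraic identity I would isolate and prove first is a Plücker/Laplace-type relation expressing, for each multi-index $1\le i_1<\dots<i_p\le n$ and each further index $i_0$, the product $x_{i_0}\cdot\det\big(D(f_1,\dots,f_p)/D(x_{i_1},\dots,x_{i_p})\big)$ as an alternating sum over the enlarged index set of the $(p+1)$-minors of $D(f_1,\dots,f_p,\rho)/2$ plus the $p$-minors with one row replaced by a radial-derivative row; summing over $i_0$ and using $\sum_{i_0} x_{i_0}^2 = \|x\|^2$ turns the left side into $\|x\|^2$ times the squared minor and lets one read off the bound on $\|x\|^m|\text{$p$-minor}|^m$ in terms of $T_m$ plus radial-error terms. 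I expect the main obstacle to be exactly the handling of these radial-error terms: one must show the $m$-th powers of the radial quantities $\langle x,\mathrm{grad} f_i\rangle$ are dominated by $\|f(x)\|^m$ up to a constant and a harmless power of $\|x\|$ in a full neighbourhood of the origin, which is where analyticity enters essentially (via the Łojasiewicz inequality or curve selection lemma), and where the $C^r$-only version of the earlier theorem had to settle for $m=2$. Once that Łojasiewicz estimate is in hand, combining it with the algebraic identity and the easy direction yields $K_m(f,\cdot)\thickapprox T_m(f,\cdot)$, completing the proof.
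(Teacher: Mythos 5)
Your first direction is fine and agrees with the paper: expanding each $(p+1)$-minor of $D(f_1,\dots,f_p,\rho)$ along the row $\grad \rho = 2x$ gives $|(p+1)\text{-minor}| \le 2\|x\|\sum|p\text{-minors}|$, whence $T_m \precsim K_m$ with no analyticity. The problem is the converse, and there your proposal has a genuine gap: the entire content of the theorem is concentrated in the ``radial-error'' estimate that you defer to ``a Łojasiewicz inequality,'' and that tool does not deliver what you need. Concretely, for $p=1$ the Lagrange identity reduces the claim to $|\langle x,\grad f\rangle| \le C\,(|f(x)| + \sum_{i<j}|x_i\partial_j f - x_j\partial_i f|)$, a comparison with exponent exactly $1$ on both sides. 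A Łojasiewicz or regular-separation inequality for the map $x\mapsto (f,\langle x,\grad f_1\rangle,\dots)$ only yields estimates with an unspecified exponent $\theta$ (and only relative to distance to a zero set, whereas you need control at points where nothing vanishes), so it cannot produce this exponent-$1$ bound; asserting it is essentially restating the theorem. Moreover your general-$p$ reduction is not as clean as claimed: by Cauchy--Binet, $4\|x\|^2\sum(\text{$p$-minor})^2 = \sum(\text{$(p+1)$-minor})^2 + b^{T}\,\mathrm{adj}(G)\,b$ with $b_i = 2\langle x,\grad f_i\rangle$ and $G$ the Gram matrix of the $\grad f_i$, so the error is a sum of products of radial parts with $(p-1)\times(p-1)$ minors, not something dominated by $\|f\|^m$ alone (indeed $|\langle x,\grad f_i\rangle|\le C\|f\|$ is already false for $f=x_1+x_2^2$ on the curve $x_1=-x_2^2$). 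Your $p=1$ paragraph includes the tangential correction but your general-$p$ paragraph drops it, and in neither case is the estimate proved.

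What the paper actually does at this point is a contradiction argument via the curve selection lemma, and the substance lies in an order-of-vanishing computation that your proposal has no substitute for. One assumes an analytic curve $\lambda(t)$ along which $C(t)^m K_m(f,\lambda(t)) > T_m(f,\lambda(t))$ with $C(t)\to 0$, rotates coordinates so that $x_1$ has strictly minimal order $\varepsilon_1(1)$ along $\lambda$, and picks a $p$-tuple $(k_1,\dots,k_p)$ realising the minimal order among the $p$-minors. The key claim is $k_1>1$: writing $\tfrac{d}{dt}(f_j\circ\lambda)=\sum_i \partial_i f_j(\lambda)\,\lambda_i'$, the first column of $Df(\lambda(t))$ is a combination of the high-order vector $\bigl(\tfrac{1}{\lambda_1'}\tfrac{d(f_j\circ\lambda)}{dt}\bigr)_j$ and the other columns scaled by $\lambda_i'/\lambda_1'$ (order $\ge 1$), so any minor using column $1$ has too large an order. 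Then the $(p+1)$-minor on columns $(1,k_1,\dots,k_p)$ is bounded above in order by $\varepsilon_1(1)+\ord(\text{minimal $p$-minor})\le \ord(h\circ\lambda)$, contradicting the assumed smallness of $T_m$ along the curve. This exponent-$1$ bookkeeping along analytic arcs is exactly the missing step in your write-up; if you want to salvage your algebraic-identity route, you would still have to run this same curve-selection order count to prove the radial estimate, at which point the identity buys you little.
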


Throughout this paper, we use the equivalence $\thickapprox$ in the following 
sense:

\vspace{2mm}

Let  $f,g : U \to \R$ be non-negative functions,
where $U \subset \R^N$ is an open neighbourhood of $0 \in \R^N$.
If there are real numbers $K > 0$, $\delta > 0$ 
with $B_{\delta}(0) \subset U$ such that
$f(x) \le K g(x)$ for any $x \in B_{\delta}(0)$,
where $B_{\delta}(0)$ is a closed ball in $\R^N$ of radius $\delta$
centred at $0 \in \R^N$, 
then we write $f \precsim g$ (or $g \succsim f$).
If $f \precsim g$ and $f \succsim g$, we write $f \thickapprox g$.

\vspace{2mm}

In the next section we mention the definitions of $C^0$-sufficiency and 
$V$-sufficiency of jets, and give the notion of the relative jet 
of a $C^s$ mapping to a given closed set $\s$.
We shall show our Main Theorem in \S \ref{proof}, and apply 
the theorem to the relative conditions to a closed set $\s$ 
in \S \ref{application}.


\section{Preliminaries}\label{preli}

\subsection{Sufficiency of jets}\label{sufficiency}
Let $s \in \N \cup \{ \infty, \omega \}$.
Let us recall ${\mathcal E}_{[s]}(n,p)$, the set of $C^s$ map-germs 
: $(\R^n,0)\to (\R^p,0)$.
Let $j^r f(0)$ denote the r-jet ($r \in \N$) of $f$ at $0 \in \R^n$ for 
$f \in {\mathcal E}_{[s]}(n,p)$, $s \ge r$, and let $J^r(n,p)$ denote 
the set of r-jets in ${\mathcal E}_{[s]}(n,p)$.

We say that $f,g\,\in {\mathcal E}_{[s]}(n,p)$ are $C^0$-{\em equivalent} 
(resp. $SV$-{\em equivalent}), if there exists a local homeomorphism 
$\sigma : (\R^n,0) \to (\R^n,0)$ such that $f = g \circ \sigma$ 
(resp. $\sigma (f^{-1}(0)) = g^{-1}(0)$).
In addition, we say that  $f,g\,\in {\mathcal E}_{[s]}(n,p)$ are 
$V$-{\em equivalent}, if $f^{-1}(0)$ is homeomorphic to $g^{-1}(0)$
as germs at $0\in \mathbb{R}^n$.

Let $w \in J^r(n,p).$ 
We call the $r$-jet $w$ $C^0$-{\em sufficient}, $SV$-{\em sufficient} and 
$V$-{\em sufficient} in $C^s$ mappings, $s \ge r$, if any two realisations
$f$, $g\,\in {\mathcal E}_{[s]}(n,p)$ of $w,$ 
namely $j^rf(0) = j^rg(0)=w,$  are $C^0$-equivalent, $SV$-equivalent
and $V$-equivalent, respectively.

Let us recall the Thom type inequality for $f \in {\mathcal E}_{[s]}(n,p)$, 
$n \ge p$ \ :

\vspace{2mm}

\noindent There are positive numbers $K, \alpha, \beta > 0$ such that 
$T_2(f,x) \ge K \| x \|^{\alpha}$ for $\| x \| < \beta .$

\vspace{2mm}

\noindent As mentioned in the Introduction, R. Thom considered 
this condition with $\alpha = 2r$ in the function case 
as a sufficient condition for $z = j^r f(0)$ to be $C^0$-sufficient 
in $C^r$ functions. 
On the other hand, he considered this condition in the mapping case 
as a sufficient condition for $SV$-sufficiency of jet. 

The Kuo condition mentioned in the Introduction is a criterion 
for $V$-sufficiency of $z = j^r f(0)$ in $C^r$ functions.
This condition is generalised to the mapping case,
as a criterion for $V$-sufficiency of $z = j^r f(0)$ in $C^r$ mappings : 
$(\R^n,0) \to (\R^p,0)$, $n \ge p$.
For the details, see T.-C. Kuo \cite{kuo3}.

\subsection{Relative jet to a given closed set}\label{relativejet}
Throughout this paper, let $\s$ be a germ of a given closed subset of $ \R^n$ 
at $0 \in \R^n$ such that $0 \in \s.$ 
Then we denote by $d(x,\s)$ the distance from a point $x \in \R^n$ 
to the subset $\s.$
 
We consider on ${\mathcal E}_{[s]}(n,p)$ the following equivalence relation: 

\vspace{1mm}

\noindent Two map-germs  $f,g\,\in {\mathcal E}_{[s]}(n,p)$ are 
$r$-$\Sigma$-{\em equivalent}, denoted by $f\sim g$, if there exists 
a neighbourhood $U$ of $0$ in $\R^n$ such that the r-jet extensions of 
$f$ and $g$ satisfy $j^rf(\s\cap U)= j^rg(\s\cap U).$

\vspace{1mm}

\noindent We denote by $j^rf(\s;0)$ the equivalence 
class of $f,$ and by $J^r_{\s}(n,p)$ the quotient set 
${\mathcal E}_{[s]}(n,p)/\sim.$ 

We can define the notions of $C^0$-sufficiency, $SV$-sufficiency and 
$V$-sufficiency of relative jets to $\s$, similarly to in the non-relative 
case. 
In \cite{bekkakoike2} we gave criteria for the relative $r$-jet to be 
$C^0$-sufficient and $V$-sufficient in ${\mathcal E}_{[r]}(n,p)$ 
or ${\mathcal E}_{[r+1]}(n,p)$, using the relative Kuiper-Kuo condition 
and relative Kuo condition (or condition ($\widetilde{K}_{\s}$)), 
respectively.


\section{Proof of Main Theorem}\label{proof}

In this section we show the equivalence between the Kuo quantity $K_{m}$ 
and the Thom quantity $T_{m}$, namely our main theorem (Theorem 
\ref{equivKT}).
Before we give the proof, let us examine an example.

\begin{example}
Let $f = (f_1, f_2) : (\R^2,0) \to (\R^2,0)$ be a polynomial mapping
defined by 
$\dis 
f_1(x,y) = x - y^2, \ \  f_2(x,y) = x^2.
$
Then we have
$
f_1(x,y)^2 + f_2(x,y)^2 = (x - y^2)^2 + x^4, 
$
$\dis
\det \left( \frac{D(f_1, f_2)}{D(x,y)}((x,y))\right) = 4xy.
$
Therefore we have
$$
T_{2}(f,(x,y)) = (x - y^2)^2 + x^4, \ \ 
K_{2}(f,(x,y)) = 16(x^2 + y^2)x^2y^2 + (x - y^2)^2 + x^4.
$$

To show that $T_{2}(f,(x,y)) \thickapprox K_{2}(f,(x,y)),$ we consider two 
cases.

In the case where $|x - y^2| \le \frac{1}{2} y^2$, we have  
$x\ge  \frac{1}{2} y^2$.
Therefore 
$64 x^4\geq 16x^2y^4$ and  since for any constant $C> 65,$ $16x^4y^2=o((C-65)x^4)$
 we get 
$$ 
C T_{2}(f,(x,y)) \ge K_{2}(f,(x,y)) \ge T_{2}(f,(x,y))
$$
in a small neighbourhood of $(0,0) \in \mathbb{R}^2,$

In the case where $|x - y^2| \ge \frac{1}{2} y^2$
we can see that
$$
(x - y^2)^2 + x^4 \ge \frac{1}{4}y^4 + x^4 \ge
16x^2y^4 + 16x^4y^4 = 16(x^2 + y^2)x^2y^2
$$
in a small neighbourhood of $(0,0) \in \R^2$.

Thus, for any constant $C> 65,$ we have 
$$
T_{2}(f,(x,y)) \le K_{2}(f,(x,y)) \le C T_{2}(f,(x,y))
$$
in a small neighbourhood of $(0,0) \in \R^2,$
it follows that $T_{2}(f,(x,y)) \thickapprox K_{2}(f,(x,y)).$
\end{example}

Let $ord(\gamma(t))$ denote the order of $\gamma$ in $t$ for
a $C^\omega$ function $\gamma: [0, \delta)\to \mathbb{R}.$
	
\begin{proof}[Proof of Theorem \ref{equivKT}] 
It is obvious that $K_{m}(f,.)\succsim T_{m}(f,.).$ 
Therefore we have to show the converse.

We first remark that if $x$ and $y$ are bigger than or equal to $0$, we have 
$$
(x+y)^m \geq x^m + y^m \geq \frac{(x+y)^m}{2^m}.
$$
It follows that
$$
K_{m}(f,x)\thickapprox v^{m}(x)+u^{m}(x)\thickapprox (h(x))^m$$
$$ 
T_{m}(f,x)\thickapprox w^{m}(x)+u^{m}(x)\thickapprox (g(x))^m,
$$  
where $u(x)=\Vert f(x) \Vert$, 

\vspace{3mm}

\qquad $v(x)=\Vert x\Vert \dis \sum_{1\leq i_1<\ldots<i_{p}\leq n} 
\left| \det \left( \frac{D(f_1,\ldots, f_p)}{D(x_{i_1},\ldots,
x_{i_{p}})}(x)\right) \right| , $

\vspace{3mm}

\qquad $w(x)=\dis \sum_{1\leq i_1<\ldots<i_{p+1}\leq n} 
\left| \det \left( \frac{D(f_1,\ldots, f_p,\rho)}{D(x_{i_1},\ldots,
x_{i_{p+1}})}(x)\right) \right|, (\text{where } \rho(x)=\| x\|^2),$ 

\vspace{3mm}

\noindent $h(x)=v(x)+u(x)$ and $g(x)=w(x)+u(x).$

Suppose now that  $K_{m}(f,.)\precsim T_{m}(f,.)$  does not hold.  
Then by the curve selection lemma, there is a $C^\omega$ curve
$\tilde\lambda=(\lambda, C): [0, \delta) \to \mathbb{R}^n\times \mathbb{R}$ 
with 
$\tilde\lambda(0)=(0,0)$ and $\tilde\lambda(t)\in 
(\mathbb{R}^n \setminus \{0\}) \times \mathbb{R}^*,$ for $t\ne 0,$
such that
\begin{equation}\label{eq1}
(C(t))^m K_{m}(f,\lambda(t))>T_{m}(f,\lambda(t)).
\end{equation}
We may write \eqref{eq1} as:
\begin{equation}\label{eq2}
(C(t)(h \circ \lambda (t)))^m>(g \circ \lambda (t))^m .
\end{equation}
Here we remark that the functions $g \circ \lambda , h \circ \lambda ,
u \circ \lambda ,v \circ \lambda$ and $w \circ \lambda$  are real analytic on
$[0, \delta)$ and satisfying the conditions
$$
g \circ \lambda (0)=h \circ \lambda (0)=u \circ \lambda (0)=v \circ \lambda (0)
=w \circ \lambda (0)=0
$$
and
$$
\lambda(t)\neq0,\quad  C(t)>0,\quad h \circ \lambda (t)>0,\quad
g \circ \lambda (t)\geq0\quad\textrm{for}\quad 0<t<\delta
$$

By \eqref{eq2}, $C(t)(h \circ \lambda (t))>u \circ \lambda (t)$,
$C(t)(h \circ \lambda (t))>w \circ \lambda (t)$ and 
$$
v \circ \lambda (t)=h \circ \lambda (t)
-u \circ \lambda(t)\geq h \circ \lambda (t)(1-C(t)).
$$
Then we have
\begin{equation}
\begin{cases}
ord(C)+ord(h \circ \lambda )\leq ord(u \circ \lambda )\\
ord(C)+ord(h \circ \lambda )\leq ord(w \circ \lambda )\\
ord(v \circ \lambda )\leq ord(h \circ \lambda ).
\end{cases}
\label{eq3}
\end{equation}
Note that we are not considering the second inequality in the case 
where $n = p$.

Let $\tilde\lambda$ be written as follows
$\lambda_i(t)= a_1^{(i)}t^{\varepsilon_1(i)}+a_2^{(i)}t^{\varepsilon_2(i)}+ \ldots$

\vspace{3mm}

\noindent where $1\leq {\varepsilon_1(i)}<{\varepsilon_2(i)}< 
\ldots$ and $\left \{\begin{array}{clcr}
a_1^{(i)} \ne 0& if & \lambda_i(t)\not\equiv 0\\
{\varepsilon_1(i)}=\infty& if & \lambda_i(t)\equiv 0\end{array}\right.$ 
$(1\leq i\leq n),$

\vspace{3mm}

\noindent $C(t)= u_1t^{b_1}+u_2t^{b_2}+ \ldots$
where $1\leq b_1<b_2<\ldots$ and $u_1\ne 0.$

Since condition \eqref{eq1} is invariant under rotation, we can assume that $\varepsilon_1(1)<\varepsilon_1(i)$ for $i\ne 1.$

Let $f_j(\lambda(t))= d_1^{(j)}t^{q_1^{(j)}}+d_2^{(j)}t^{q_2^{(j)}}+ \ldots$, 
where $1\leq q_1^{(j)}<q_2^{(j)}< \ldots$  $(1\leq j\leq p).$
Then $$\frac{df_j \circ\lambda}{dt}(t)= q_1^{(j)}d_1^{(j)}t^{q_1^{(j)}-1}+q_2^{(j)}d_2^{(j)}t^{q_2^{(j)}-1}+ \ldots
\qquad(1\leq j\leq p).$$
It follows from \eqref{eq3} that
\begin{equation}\label{eq4}
q_1^{(j)}\geq  ord(C)+ord(h \circ \lambda )\qquad \text{ 
for all } j\in \{1,\ldots, p\}.
\end{equation}

By \eqref{eq3} again, we have

\begin{equation}\label{eq5}
\varepsilon_1(1) +ord\left(\sum_{1\leq i_1<\ldots<i_p\leq n} \left|
\det \left( \frac{D(f_1,\ldots, f_p)}{D(x_{i_1},\ldots, 
x_{i_p})}(\lambda(t))\right) \right|\right)\leq ord(h \circ \lambda ).
\end{equation}

Therefore there is a $p$-tuple of integers $(k_1,\dots, k_p)$ 
with $1\leq k_1<\dots<k_p\leq n$ such that

\begin{equation}\label{eq6}
\left \{\begin{array}{clcr}
ord(\vert \det \left( \frac{D(f_1,\ldots, f_p)}{D(x_{k_1},\ldots, 
x_{k_p})}(\lambda(t))\right) \vert) \leq 
ord(\vert \det \left( \frac{D(f_1,\ldots, f_p)}{D(x_{i_1},\ldots, 
x_{i_p})}(\lambda(t))\right) \vert)\, \\
\text{for any}\,(i_1,\ldots, i_p), \ \ \text{and}\\ 
\qquad ord(\vert \det \left( \frac{D(f_1,\ldots, f_p)}{D(x_{k_1},
\ldots, x_{k_p})}(\lambda(t))\right) \vert) \leq 
ord(h\circ \lambda ) -\varepsilon_1(1). 
\end{array}\right.
\end{equation}

We continue the proof of the converse, dividing it into two cases.
We first consider the case where $n > p$.
Then we have the following.

\vspace{3mm}

\begin{claim} 
$k_{1}>1.$
\end{claim}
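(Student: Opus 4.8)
The plan is to prove $k_1 > 1$ by contradiction: assume $k_1 = 1$, so that the Jacobian minor with the minimal order among all $p\times p$ minors of $(f_1,\dots,f_p)$ uses the variable $x_1$. The idea is to exploit the fact that, by our normalization, $\varepsilon_1(1)$ is strictly smaller than $\varepsilon_1(i)$ for all $i \ne 1$, so $\lambda_1$ is the ``fastest'' coordinate, and to compare the order of the $(f_1,\dots,f_p)$-minor on columns $(x_1,x_{k_2},\dots,x_{k_p})$ against the order of a $(p+1)\times(p+1)$ minor of $(f_1,\dots,f_p,\rho)$, which is what controls $w\circ\lambda$ and hence $g\circ\lambda$.

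First I would write out the chain rule along the curve: $\frac{d}{dt}(f_j\circ\lambda)(t) = \sum_{i=1}^n \frac{\partial f_j}{\partial x_i}(\lambda(t))\,\lambda_i'(t)$, and similarly $\frac{d}{dt}(\rho\circ\lambda)(t) = 2\sum_i \lambda_i(t)\lambda_i'(t)$. The key algebraic observation is that the derivative vector $\bigl(\tfrac{d}{dt}(f_1\circ\lambda),\dots,\tfrac{d}{dt}(f_p\circ\lambda),\tfrac{d}{dt}(\rho\circ\lambda)\bigr)$ lies in the row span of the Jacobian matrix of $(f_1,\dots,f_p,\rho)$ paired with the column vector $\lambda'(t)$; this produces a Cramer-type identity expressing the $t$-derivative of a chosen minor of $(f_1,\dots,f_p,\rho)$ as a sum $\sum_i \lambda_i'(t)\cdot(\text{cofactor})$, where the cofactors are $(p+1)\times(p+1)$ or $p\times p$ minors. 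Concretely, if $k_1=1$, one considers the column index set $\{1,k_2,\dots,k_p\}$ together with an auxiliary index and uses $\rho$ to ``absorb'' the $x_1$-direction; then the estimate $\ord(\lambda_1') = \varepsilon_1(1)-1$ combined with \eqref{eq6} forces the order of some $(p+1)\times(p+1)$ minor of $(f_1,\dots,f_p,\rho)$ to be at most $\ord(h\circ\lambda) + \varepsilon_1(1) - \varepsilon_1(1) - 1 < \ord(h\circ\lambda) \le \ord(g\circ\lambda)$ via the third inequality of \eqref{eq3}. Wait — more carefully, I would aim to show $\ord(w\circ\lambda) \le \ord(C) + \ord(h\circ\lambda) - 1 < \ord(C)+\ord(h\circ\lambda)$, directly contradicting the second line of \eqref{eq3}.

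The technical heart is the Cramer/cofactor identity that lets one ``differentiate'' a determinant along the curve and recognize the result as a combination of higher minors. I expect the main obstacle to be bookkeeping: ensuring that the particular $(p+1)$-subset of variables one picks genuinely realizes the order bound (rather than suffering accidental cancellation of leading terms), and handling the possibility that the relevant $p\times p$ minor $\det\frac{D(f_1,\dots,f_p)}{D(x_1,x_{k_2},\dots,x_{k_p})}$ vanishes identically along $\lambda$ while some sibling minor does not. This is where the minimality clause in \eqref{eq6} and the strict inequality $\varepsilon_1(1)<\varepsilon_1(i)$ do the real work: the former guarantees the chosen $p$-minor is order-optimal, and the latter guarantees that replacing one of the columns $x_{k_2},\dots,x_{k_p}$ by $x_1$ (or by an index not in $\{k_1,\dots,k_p\}$) and using $\rho$ strictly lowers, rather than raises, the order relative to what \eqref{eq3} permits. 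Once the contradiction is reached in the case $n>p$, the parallel argument (omitting the $\rho$-column and the second inequality, as the remark after \eqref{eq3} indicates) handles $n=p$, though in that degenerate case $T_m(f,x)=\|f(x)\|^m$ and the claim $k_1>1$ is either vacuous or proved by a direct comparison of $v\circ\lambda$ with $u\circ\lambda$.
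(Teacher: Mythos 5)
Your strategy is inverted relative to what the hypotheses allow, and its central step does not go through. You assume $k_1=1$ and aim to contradict the \emph{second} line of \eqref{eq3} by exhibiting a $(p+1)\times(p+1)$ minor of $(f_1,\ldots,f_p,\rho)$ whose order along $\lambda$ is less than $\ord(C)+\ord(h\circ\lambda)$. But $k_1=1$ yields no such upper bound on $\ord(w\circ\lambda)$. The only mechanism for bounding a Thom minor's order from above is the cofactor expansion along the $\rho$-row, whose terms are $\pm 2\lambda_i(t)$ times the $p\times p$ minor \emph{omitting} the column $x_i$; to obtain a strictly dominant, non-cancelling term one must pair the unique lowest-order coordinate $\lambda_1$ with the lowest-order $p$-minor $\det\bigl(\frac{D(f_1,\ldots,f_p)}{D(x_{k_1},\ldots,x_{k_p})}\bigr)$, and such a pairing occurs in some $(p+1)$-minor precisely when $x_1$ is \emph{not} among the columns $x_{k_1},\ldots,x_{k_p}$ --- that is, precisely when $k_1>1$, which is what is to be proved. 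Under $k_1=1$, every term of every $\rho$-row expansion either multiplies the minimal minor by some $\lambda_j$ with $\varepsilon_1(j)>\varepsilon_1(1)$ (and $\varepsilon_1(j)$ may be arbitrarily large, so you cannot get below $\ord(C)+\ord(h\circ\lambda)$), or multiplies $\lambda_1$ by a non-minimal minor; moreover nothing rules out cancellation of leading terms. So the contradiction you are after is exactly the one the paper derives \emph{after} the Claim, and it is not available under the Claim's negation.

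The missing idea is that the Claim is forced by the \emph{first} line of \eqref{eq3}, i.e.\ by the bound on $\ord(u\circ\lambda)=\ord\|f\circ\lambda\|$, which your proposal never uses. From it one gets $q_1^{(j)}\ge \ord(C)+\ord(h\circ\lambda)$ for every $j$ (this is \eqref{eq4}); then the chain-rule identity
\begin{equation*}
\frac{\partial f_j}{\partial x_1}(\lambda(t))=\frac{1}{\lambda_1'(t)}\,\frac{d(f_j\circ\lambda)}{dt}(t)-\sum_{i\ge 2}\frac{\lambda_i'(t)}{\lambda_1'(t)}\,\frac{\partial f_j}{\partial x_i}(\lambda(t)),
\end{equation*}
substituted into the $x_1$-column of the minimal $p$-minor and expanded by multilinearity, shows --- using $\ord(\lambda_i'/\lambda_1')\ge 1$ and the order-minimality of the $(k_1,\ldots,k_p)$-minor --- that this minor would have order at least $\ord(C)+\ord(h\circ\lambda)-\varepsilon_1(1)>\ord(h\circ\lambda)-\varepsilon_1(1)$, contradicting the second part of \eqref{eq6}. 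Note that $\rho$ and the Thom minors play no role in the Claim itself. Your closing remark on $n=p$ is also off the mark: there the only $p$-tuple is $(1,\ldots,n)$, so $k_1=1$ necessarily and the Claim is false, which is why the paper asserts it only for $n>p$ and treats $n=p$ by running the column-substitution argument on the full Jacobian directly.
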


\begin{claimproof}
Since 
$\dis\frac{df_j\circ\lambda}{dt}(t)= \sum _{i=1}^n{\frac{\partial f_j}{\partial x_i}}
(\lambda(t)){\frac{d\lambda_i}{dt}(t)},
 \ (1\leq j\leq p),
$
we have
 
 \begin{equation}
 \left (\begin{array}{clcr}
\frac{df_1\circ\lambda}{dt}(t)\\
\vdots\\
\frac{df_p\circ\lambda}{dt}(t) \end{array}\right)
= {\frac{d\lambda_1}{dt}}(t)
\left (\begin{array}{clcr}
{\frac{\partial f_1}{\partial x_1}}(\lambda(t))\\
\vdots\\
{\frac{\partial f_p}{\partial x_1}}(\lambda(t)) \end{array}\right)
+\ldots+ {\frac{d\lambda_n}{dt}}(t)\left (\begin{array}{clcr}
{\frac{\partial f_1}{\partial x_n}}(\lambda(t))\\
\vdots\\
{\frac{\partial f_p}{\partial x_n}}(\lambda(t)) \end{array}\right).
\label{eq7}
 \end{equation}
 
\noindent Here we remark that, by \eqref{eq4}

\begin{equation}\label{eq8}
ord\left(\dis \frac{1}{\lambda'_1(t)}.\frac{df_j\circ\lambda}{dt}(t)\right) =q_1^{(j)}-\varepsilon_1(1)  \geq ord(C)+ord(h\circ \lambda)-\varepsilon_1(1) 
\quad (1\leq j\leq p),
\end{equation}
and 
\begin{equation}\label{eq9}
ord\left(\frac{\lambda'_i(t)}{\lambda'_1(t)}\right) \geq1 
\qquad (2\leq i\leq n) .
\end{equation}

Assume, by contradiction, that $k_{1}=1$ in \eqref{eq6}. For simplicity, set 
$$
A(t)=\left(\begin{array}{cccc}{\frac{\partial f_1}{\partial x_1}}(\lambda(t))&
{\frac{\partial f_1}{\partial x_{k_2}}}(\lambda(t))&\ldots&
{\frac{\partial f_1}{\partial x_{k_p}}}(\lambda(t))\\
\vdots & \vdots & & \vdots \\
{\frac{\partial f_p}{\partial x_1}}(\lambda(t))&
{\frac{\partial f_p}{\partial x_{k_2}}}(\lambda(t))&\ldots&
{\frac{\partial f_p}{\partial x_{k_p}}}(\lambda(t))\end{array}\right).
$$
Then the determinant of the matrix $A(t)$ is the summation of determinants of
the following matrices:

\begin{equation}
\begin{pmatrix}
\ldelim({3}{1cm}[$\frac{1}{\lambda'_1(t)}$]&\frac{df_1\circ\lambda}{dt}(t)&\rdelim){3}{0.4cm}[]
&{\frac{\partial f_1}{\partial x_{k_2}}}(\lambda(t))&\ldots&{\frac{\partial f_1}{\partial x_{k_p}}}(\lambda(t))\\
&\vdots&&\vdots&&\vdots\\
&\frac{df_p\circ\lambda}{dt}(t)&&{\frac{\partial f_p}{\partial x_{k_2}}}(\lambda(t))&\ldots&{\frac{\partial f_p}{\partial x_{k_p}}}(\lambda(t))
\end{pmatrix}
\label{eq10}
\end{equation}

\begin{equation}
\begin{pmatrix}
\ldelim({3}{1.5cm}[$-\frac{\lambda'_i(t)}{\lambda'_1(t)}$]&
{\frac{\partial f_1}{\partial x_i}}(\lambda(t))&\rdelim){3}{0.4cm}[]
&{\frac{\partial f_1}{\partial x_{k_2}}}(\lambda(t))&\ldots&{\frac{\partial f_1}{\partial x_{k_p}}}(\lambda(t))\\
&\vdots&&\vdots&&\vdots\\
&{\frac{\partial f_p}{\partial x_i}}(\lambda(t))&&{\frac{\partial f_p}{\partial x_{k_2}}}(\lambda(t))&\ldots&{\frac{\partial f_p}{\partial x_{k_p}}}(\lambda(t))
\end{pmatrix}
 \, \text{for } i\in\{2,\ldots, n\}. 
 \label{eq11}
\end{equation}

By \eqref{eq8} the order of the determinant of the matrix \eqref{eq10} is  
bigger than or equal to $ord(C)+ord(h\circ \lambda)-\varepsilon_1(1)$,
and  by \eqref{eq5} the order of the determinant of the matrix \eqref{eq11} is 
bigger than the order of the determinant of the matrix \eqref{eq10}.
Therefore we have  
$$
ord(\vert \det A(t)\vert)\geq ord(C)+ord(h \circ \lambda )-\varepsilon_1(1)>ord(h\circ \lambda)-\varepsilon_1(1)
$$
which contradicts $\eqref{eq6}$. 
This completes the proof of the claim.
\end{claimproof} 

\vspace{3mm}

It follows from the Claim that there is a $p$-tuple $(k_1,\dots, k_p)$
with $1<k_1<\dots< k_p\leq n$ such that condition  \eqref{eq6} holds.
Then 
$$
ord\left(\left| \det \left( \frac{D(f_1,\ldots, f_p,\rho)}{D(x_{1},x_{k_1},
\ldots, x_{k_p})}(\lambda(t))\right) \right| \right) \leq
ord(\lambda) + ord\left(\left| \det \left( \frac{D(f_1,\ldots, f_p)}{D(x_{k_1},
\ldots, x_{k_p})}(\lambda(t)) \right) \right|\right)
$$
$$
\leq \varepsilon_1(1)+ord(h \circ \lambda ) -\varepsilon_1(1) 
= ord(h \circ \lambda ).
$$ 
This contradicts \eqref{eq3}, and it follows that
$K_{m}(f,.)\precsim T_{m}(f,.)$.

We next consider the case where $n = p$.
Using a similar argument to the proof of the above Claim,
we get the same contradiction for
$$
A(t)=\left(\begin{array}{cccc}{\frac{\partial f_1}{\partial x_1}}(\lambda(t))&
{\frac{\partial f_1}{\partial x_2}}(\lambda(t))&\ldots&
{\frac{\partial f_1}{\partial x_n}}(\lambda(t))\\
\vdots & \vdots & & \vdots \\
{\frac{\partial f_n}{\partial x_1}}(\lambda(t))&
{\frac{\partial f_n}{\partial x_2}}(\lambda(t))&\ldots&
{\frac{\partial f_n}{\partial x_n}}(\lambda(t))\end{array}\right).
$$
Therefore it follows that $K_{m}(f,.)\precsim T_{m}(f,.)$,
and this completes the proof.
\end{proof}

\begin{rmk} The proof of Theorem \ref{equivKT} uses essentially the curve 
selection lemma. 
Therefore it is not difficult to see that the results are still valid if we 
suppose only that $f$ is an arc-analytic and differentiable subanalytic 
map-germ; see \cite{kurdyka}, \cite{hironaka} and \cite{bierstonemilman} 
for the notions and properties of subanalytic and arc-analytic functions.

\end{rmk}



\section{Applications of our main result to the relative case}\label{application}


\subsection{$\s$-$r$-compatibility}

We now introduce some notion for a $C^r$-map germ 
$f:(\mathbb{R}^n,0)\to (\mathbb{R}^p,0)$ in order to extend to the relative 
case the previous equivalence defined in the non-relative case.

Let $\s$ be a germ at $0 \in \R^n$ of closed set such that $0 \in \s$.
Given a map $g\in {\mathcal E}_{[r]}(n,p)$ with $j^rg(\s ;0) = j^rf(\s ;0)$, 
let $f_t: (\mathbb{R}^n,0)\to (\mathbb{R}^p,0)$ denote the $C^r$ mapping 
defined by $$f_t(x)= f(x) + t(g(x)-f(x)) \text{ for }\, \,t\in [0,1].$$

\begin{defn}\label{rcompatibility}
A condition $(*)$ on a $C^r$ map $f$ is called $\s$-$r$-{\em compatible 
in the direction} $g$, if $f_t$ satisfies
condition $(*)$ for any $t\in [0,1].$
If condition $(*)$ is $\s$-$r$-compatible in any direction $g\in {\mathcal E}_{[r]}(n,p)$ with $j^rg(\s ;0) = j^rf(\s ;0)$,
we simply say condition $(*)$ is $\s$-$r$-{\em compatible}.
\end{defn}
Let $f: ( \mathbb{R}^n,0)\to (\mathbb{R}^p,0)$ be a $C^1$ map-germ, 
$\s\subset \mathbb{R}^n$ be a germ of a closed set such that $0\in \s$ and 
$r\in \mathbb{N}.$
For $m\in \mathbb{N}$, we introduce the following conditions:
\begin{gather*}
I^T_{r}(m):\ \exists c,\delta>0\, \text{ such that }  T_{m}(f,x)\geq c(d(x,\s))^{rm}
\end{gather*}
  for $ \|x\|<\delta ,$
\begin{gather*}
I^K_{r}(m):\ \exists c,\delta>0\, \text{ such that } 
K_{m}(f,x)\geq c(d(x,\s))^{rm}
\end{gather*}
for $\|x\|<\delta .$
\begin{rmk} If $f$ is $C^{\omega},$ we have from Theorem \ref{equivKT}, 
for any $m\in \mathbb{N}$,
$$
I^T_{r}(m) \ \text{holds if and only if} \ I^K_{r}(m) \ \text{holds.}
$$
\end{rmk}

\begin{prop}\label{Propcomp} 
The conditions  $I^T_{r}(m)$ and $I^K_{r}(m)$ are r-compatibles.
\end{prop}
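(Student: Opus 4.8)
The plan is to reduce $r$-compatibility of $I^T_r(m)$ and of $I^K_r(m)$ to the stability, under the deformation $f\mapsto f_t=f+t(g-f)$, of a lower bound of "linear'' type, and then to control the effect of the deformation on the Kuo and Thom quantities by a Taylor estimate for $h=g-f$ along $\Sigma$. Fix $g\in{\mathcal E}_{[r]}(n,p)$ with $j^rg(\Sigma;0)=j^rf(\Sigma;0)$ and $t\in[0,1]$; we must show $f_t$ satisfies the same condition as $f$, with constants not depending on $t$. Since $\sum_i a_i^m+b^m\thickapprox\bigl(\sum_i a_i+b\bigr)^m$ for non-negative reals (as already used in \S\ref{proof}), with constants depending only on $m,n,p$, we have $K_m(\varphi,\cdot)\thickapprox(v_\varphi+u_\varphi)^m$ and $T_m(\varphi,\cdot)\thickapprox(w_\varphi+u_\varphi)^m$ for any map $\varphi$, where $u_\varphi,v_\varphi,w_\varphi$ denote the functions $u,v,w$ of \S\ref{proof} formed from $\varphi$ in place of $f$. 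Hence $I^K_r(m)$ (resp.\ $I^T_r(m)$) for $f$ is equivalent to $v_f(x)+u_f(x)\succsim d(x,\Sigma)^r$ (resp.\ $w_f(x)+u_f(x)\succsim d(x,\Sigma)^r$), and it suffices to prove that these two "linear'' conditions are $r$-compatible.

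Put $h=g-f$, so that $j^rh(y)=0$ for every $y$ in $\Sigma$ near $0$; thus every partial derivative of $h$ of order $\le r$ vanishes on $\Sigma$ near $0$. Writing Taylor's formula for $h_j$ and for its first partials at a nearest point $x^\ast\in\Sigma$ of $x$ (so $\|x-x^\ast\|=d(x,\Sigma)$), and using that the relevant Taylor coefficients vanish, one obtains, for $x$ near $0$,
$$\|h(x)\|\le C\,d(x,\Sigma)^{r}\,\omega(d(x,\Sigma)),\qquad \Bigl|\tfrac{\partial h_j}{\partial x_i}(x)\Bigr|\le C\,d(x,\Sigma)^{r-1}\,\omega(d(x,\Sigma)),$$
where $\omega(s)\to0$ as $s\to0^+$ is a modulus of continuity, on a compact neighbourhood of $0$, of the order-$r$ partial derivatives of the $h_j$. (If $g$ is taken in ${\mathcal E}_{[r+1]}(n,p)$ those derivatives are Lipschitz and one may take $\omega(s)=Cs$.)

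Next compare the quantities of $f_t$ and of $f$. Each Jacobian determinant in $K_m$ and $T_m$ is multilinear in its rows $\nabla f_{t,1},\dots,\nabla f_{t,p}$, and for $T_m$ there is in addition the fixed row $\nabla\rho=2x$; writing $\nabla f_{t,j}=\nabla f_j+t\,\nabla h_j$ and expanding, every term containing at least one $\nabla h_j$-row is, by the estimates above, $O\bigl(d^{r-1}\omega(d)\bigr)$ for a $p\times p$ minor and $O\bigl(\|x\|\,d^{r-1}\omega(d)\bigr)$ for a $(p+1)\times(p+1)$ determinant — cofactor-expand along the row $\nabla\rho$, which supplies a factor $\le 2\|x\|$. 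Since $t^{|S|}\le1$, all resulting constants are independent of $t$, and writing $d=d(x,\Sigma)$,
$$|v_{f_t}(x)-v_f(x)|\le C\|x\|d^{r-1}\omega(d),\ \ |w_{f_t}(x)-w_f(x)|\le C\|x\|d^{r-1}\omega(d),\ \ |u_{f_t}(x)-u_f(x)|\le\|h(x)\|\le Cd^{r}\omega(d).$$
Combining with $v_f+u_f\succsim d^r$ (resp.\ $w_f+u_f\succsim d^r$) and with $d(x,\Sigma)\le\|x\|$ gives
$$v_{f_t}(x)+u_{f_t}(x)\ \ge\ c\,d(x,\Sigma)^r-C'\,\|x\|\,d(x,\Sigma)^{r-1}\,\omega\bigl(d(x,\Sigma)\bigr),$$
and likewise for $w_{f_t}+u_{f_t}$.

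The step I expect to be the main obstacle is absorbing the error $\|x\|\,d^{r-1}\omega(d)$ into $c\,d^r$: this does not follow from $\omega\to0$ alone, because $d(x,\Sigma)$ can be much smaller than $\|x\|$. When $g\in{\mathcal E}_{[r+1]}(n,p)$ it is immediate — the error is $\le C''\|x\|d^{r}$, so shrinking the ball so that $C''\|x\|<c/2$ gives $v_{f_t}+u_{f_t}\ge\tfrac c2 d^r$, hence $I^K_r(m)$ (resp.\ $I^T_r(m)$) for $f_t$ with a constant independent of $t$, which is exactly $r$-compatibility. For a merely $C^r$ perturbation one argues pointwise: at $x$ where the value term already dominates, $u_f(x)\ge\tfrac12 c\,d^r$, one has $u_{f_t}(x)\ge u_f(x)-\|h(x)\|\ge\tfrac14 c\,d^r$ for $\|x\|$ small, and is done; at $x$ where instead the Jacobian part is critical, say $w_f(x)\ge\tfrac12 c\,d^r$ in the Thom case, some single determinant has $|D_I(f)(x)|\gtrsim d^r$ and one must show $|D_I(f_t)(x)|\ge|D_I(f)(x)|-C\|x\|d^{r-1}\omega(d)\gtrsim d^r$, i.e.\ $\|x\|\,\omega(d(x,\Sigma))\lesssim d(x,\Sigma)$ on the locus where $f$'s quantity is carried by its Jacobian part. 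Establishing this last inequality — necessarily using both the order of vanishing of $h$ along $\Sigma$ and the structure imposed on $f$ by $I^T_r(m)$ near such points — is the technical heart of the argument; the case of $K_m$ is entirely analogous, with the explicit factor $\|x\|$ in $v_f$ playing the role of the row $\nabla\rho$.
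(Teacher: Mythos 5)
Your strategy is the same as the paper's: set $f_t=f+th$ with $j^rh$ vanishing on $\Sigma$, deduce $\|h(x)\|=o(d(x,\Sigma)^r)$ and $|\partial h_j/\partial x_i(x)|=o(d(x,\Sigma)^{r-1})$ from Taylor's formula at a nearest point of $\Sigma$, and expand the determinants multilinearly; the paper's proof consists of exactly these ingredients together with the assertion that the expansion contributes only $o(d(x,\Sigma)^{rm})$. However, you have not completed the argument, and the step at which you stop is a genuine gap rather than a routine verification. As you observe, every cross-term produced by a row $\nabla h_j$ in a minor of $K_m$ (after multiplication by $\|x\|$) or in a $(p+1)\times(p+1)$ determinant of $T_m$ (through the row $\nabla\rho=2x$) is only $O\bigl(\|x\|\,d(x,\Sigma)^{r-1}\omega(d(x,\Sigma))\bigr)$, and since $\|x\|/d(x,\Sigma)$ is unbounded whenever $\Sigma\neq\{0\}$, this is not $o(d(x,\Sigma)^{r})$: for $\Sigma$ a hyperplane, $\omega(s)=s^{1/4}$ and points with $d(x,\Sigma)=\|x\|^{2}$, one gets $\|x\|\,\omega(d)\,d^{r-1}=\|x\|^{3/2}d^{r-1}\gg d^{r}$. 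Hence the error cannot be absorbed into $c\,d(x,\Sigma)^{rm}$ by shrinking the neighbourhood, except in the two sub-cases you do settle (the value term dominates, or $g\in{\mathcal E}_{[r+1]}(n,p)$). In the remaining case --- the Jacobian part of $T_m(f,\cdot)$ or $K_m(f,\cdot)$ carrying the lower bound at points where $d(x,\Sigma)\ll\|x\|$ --- you state the inequality $\|x\|\,\omega(d(x,\Sigma))\precsim d(x,\Sigma)$ that would be needed on that locus, but you give no proof of it, and it does not follow from anything you have established. Until that step is supplied, the proposal is not a proof.

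For what it is worth, your analysis is sharper than the one-line justification in the paper: the displayed identities $T_m(f_t,x)=T_m(f,x)+o(d(x,\Sigma))^{rm}$ and $K_m(f_t,x)=K_m(f,x)+o(d(x,\Sigma))^{rm}$ are precisely the unjustified absorption you isolate (indeed for $m\geq 2$ even the value term only contributes $o(d(x,\Sigma)^{r})$ unless $\|f(x)\|$ itself is $O(d(x,\Sigma)^{r})$, which is why the pointwise case distinction you make is the right way to use the hypothesis $I^T_r(m)$, resp.\ $I^K_r(m)$). Your reduction to the $m=1$ conditions $v_f+u_f\succsim d(\cdot,\Sigma)^{r}$ and $w_f+u_f\succsim d(\cdot,\Sigma)^{r}$, and your treatment of the two tractable sub-cases, are correct; what is missing is the argument for the critical locus, which is the entire content of the proposition.
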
 

\begin{proof}
Let $f_{t}=f+th$ with $h = g - f.$
Then $\|h\|=o(d(.,\s)^{r}),$  $\|f_{t}\|\geq \|f\|-\|h\|$ and the expansion of 
the determinants give
$$
T_{m}(f_{t},x)= T_{m}(f,x)+o(d(x,\s))^{rm}
$$
 and 
$$
K_{m}(f_{t},x)= K_{m}(f,x)+o(d(x,\s))^{rm}.
$$
Thus the r-compatibilities of $I^T_{r}(m)$ and $I^K_{r}(m)$ follow.
\end{proof} 

As a corollary of Theorem \ref{equivKT}, we have the following result. 

\begin{cor}\label{corollary1} 
Let $\s$ be a germ at $0 \in \R^n$ of a closed set such that 
$0 \in \s$.
Let $r \in \N$, and let $f\in {\mathcal E}_{[r]}(n,p) $, $n\geq p$.  
Suppose that $j^rf(\s,0)$ has a $C^\omega$ realisation. \\
Then for any $m\in \mathbb{N}$, 
$$
I^T_{r}(m) \text{  holds if and only if  }  I^K_{r}(m) \text{   holds.  }
$$ 
\end{cor}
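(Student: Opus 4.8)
The plan is to derive Corollary~\ref{corollary1} from Theorem~\ref{equivKT} and Proposition~\ref{Propcomp} by a reduction argument: replace the merely $C^r$ germ $f$ by an analytic realisation of the same relative jet, transfer the conditions along a path of deformations, and invoke the remark that $I^T_r(m)\Leftrightarrow I^K_r(m)$ in the analytic case.

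More precisely, first I would choose, using the hypothesis, a $C^\omega$ map-germ $g\in {\mathcal E}_{[\omega]}(n,p)$ with $j^rg(\s;0)=j^rf(\s;0)$, so that $h:=g-f$ satisfies $\|h\|=\smallo(d(\cdot,\s)^r)$ and likewise all first-order partial derivatives of $h$ are $\smallo(d(\cdot,\s)^{r-1})$ near $0$ (this is exactly the content of the relative jet coincidence on $\s$, spelled out with the distance function). Next, exactly as in the proof of Proposition~\ref{Propcomp}, the multilinear expansion of the determinants appearing in $K_m$ and $T_m$ gives
$$
T_m(f,x)=T_m(g,x)+\smallo\big(d(x,\s)^{rm}\big),\qquad
K_m(f,x)=K_m(g,x)+\smallo\big(d(x,\s)^{rm}\big)
$$
in a neighbourhood of $0$. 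From these two estimates it follows immediately that $I^T_r(m)$ holds for $f$ if and only if it holds for $g$, and the same for $I^K_r(m)$: indeed a lower bound $c\,d(x,\s)^{rm}$ for one of the two sides, with the error term absorbed by halving $c$ and shrinking $\delta$, is equivalent to the corresponding lower bound for the other side. (Alternatively one may phrase this as the $t=1$ endpoint of the $\s$-$r$-compatibility established in Proposition~\ref{Propcomp}, but the direct comparison with $g$ is cleaner here.)

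Finally, since $g$ is $C^\omega$, Theorem~\ref{equivKT} gives $K_m(g,\cdot)\thickapprox T_m(g,\cdot)$; in particular there are constants $A,B>0$ and $\delta>0$ with $A\,T_m(g,x)\le K_m(g,x)\le B\,T_m(g,x)$ for $\|x\|<\delta$, so a lower bound of the form $\gtrsim d(x,\s)^{rm}$ for $T_m(g,\cdot)$ is equivalent to the same for $K_m(g,\cdot)$; that is, $I^T_r(m)\Leftrightarrow I^K_r(m)$ for $g$ (this is precisely the Remark preceding Proposition~\ref{Propcomp}). Chaining the three equivalences — $I^T_r(m)$ for $f$ $\Leftrightarrow$ $I^T_r(m)$ for $g$ $\Leftrightarrow$ $I^K_r(m)$ for $g$ $\Leftrightarrow$ $I^K_r(m)$ for $f$ — yields the corollary.

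The one step that requires genuine care, rather than bookkeeping, is the passage from "$j^rg(\s;0)=j^rf(\s;0)$" to the pointwise estimates $\|h(x)\|=\smallo(d(x,\s)^r)$ and $\|\partial h/\partial x_i(x)\|=\smallo(d(x,\s)^{r-1})$ uniformly near $0$: the relative jet condition a priori only controls Taylor data of $h$ at each point of $\s$, and one must use a Taylor-with-remainder argument (together with $h\in C^r$) at the nearest point of $\s$ to turn this into a bound in terms of $d(x,\s)$. Once that is in hand, everything else is the routine determinant expansion already used in Proposition~\ref{Propcomp} and the trivial absorption of $\smallo$-terms into the constants.
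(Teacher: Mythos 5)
Your proof is correct and takes essentially the same route as the paper: choose a $C^\omega$ realisation $g$ of $j^rf(\s;0)$, apply Theorem~\ref{equivKT} to $g$ to get $I^T_r(m)\Leftrightarrow I^K_r(m)$ for $g$, and transfer each condition between $f$ and $g$ via the determinant-expansion estimate, which is precisely the content (and the proof) of Proposition~\ref{Propcomp}. The Taylor-with-remainder point you flag, deducing $\|h\|=\smallo(d(\cdot,\s)^r)$ from the relative jet coincidence, is left implicit in the paper but is a legitimate and correctly identified ingredient.
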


\begin{proof}
Let $g : (\R^n,0) \to (\R^p,0)$ be a $C^\omega$ realisation of
$j^rf(\s,0)$.
From Theorem \ref{equivKT}, conditions $I^T_{r}(m)$ and $I^K_{r}(m)$ are
equivalent for $g$.
Now, by Proposition \ref{Propcomp},  the result follows. 
\end{proof} 

\begin{rmk} 
As pointed out in \cite{bekkakoike2}, any $r$-jet, $r \in \N$, has 
a unique polynomial realisation of degree not exceeding $r$ in the 
non-relative case, but some $r$-jets do not have even a $C^{\omega}$ 
realisation in the general relative case. 
Therefore, in the above theorem, the assumption that $j^rf(\s,0)$ has 
a $C^\omega$ realisation makes sense.
\end{rmk} 

\begin{lem}\label{lemma1}
For $X_{1},\ldots,X_{l}\geq 0$  and a positive integer 
$m \in \mathbb{N}$,
we have
$$
(X_{1}+\ldots+X_{l})^m \thickapprox X_{1}^m+\ldots+X_{l}^m. 
$$ 
Therefore we see that $K_{1}\thickapprox T_{1}$ if and only if for any  
$m\in \mathbb{N},\,\, K_{m}\thickapprox T_{m} .$
\end{lem}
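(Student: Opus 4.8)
\textbf{Proof proposal for Lemma \ref{lemma1}.}

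The plan is to prove the elementary inequality
$$
(X_{1}+\ldots+X_{l})^m \thickapprox X_{1}^m+\ldots+X_{l}^m
$$
for non-negative reals $X_1,\dots,X_l$ and $m\in\N$, and then to deduce the equivalence statement about $K_1,T_1$ and $K_m,T_m$. For the inequality itself, the direction $X_1^m+\dots+X_l^m \precsim (X_1+\dots+X_l)^m$ is immediate: each $X_i\le X_1+\dots+X_l$, so $X_i^m\le (X_1+\dots+X_l)^m$, and summing gives the bound with constant $l$. For the reverse direction, one can either iterate the two-variable estimate $(x+y)^m\ge x^m+y^m$ (valid for $x,y\ge0$, already used in the proof of Theorem \ref{equivKT}) — which by an easy induction on $l$ gives $(X_1+\dots+X_l)^m\ge X_1^m+\dots+X_l^m$, constant $1$ — or observe directly that since $X_j\ge0$, the multinomial expansion of $(X_1+\dots+X_l)^m$ contains the terms $X_1^m,\dots,X_l^m$ among its summands and all terms are non-negative. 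Either way no smallness of the variables is needed; the equivalence is global on $[0,\infty)^l$ with constants depending only on $l$ and $m$.

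Next I would apply this to the Kuo and Thom quantities. Write, as in the proof of Theorem \ref{equivKT}, $K_m(f,x)$ as a sum of $m$-th powers of the non-negative quantities $\|x\|\,|\det(\cdot)|$ (over the $p$-element index sets) together with $\|f(x)\|^m$, and similarly $T_m(f,x)$ as a sum of $m$-th powers of the $\|x\|$-free $(p+1)\times(p+1)$ minors together with $\|f(x)\|^m$. By the inequality just proved, $K_m(f,x)\thickapprox (K_1(f,x))^m$ and $T_m(f,x)\thickapprox (T_1(f,x))^m$, with constants independent of $x$ (depending only on $n,p,m$). Hence $K_1\thickapprox T_1$ forces $(K_1)^m\thickapprox (T_1)^m$, and therefore $K_m\thickapprox T_m$; conversely $K_m\thickapprox T_m$ gives $(K_1)^m\thickapprox(T_1)^m$, and since $t\mapsto t^{1/m}$ is monotone on $[0,\infty)$, extracting $m$-th roots yields $K_1\thickapprox T_1$.

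There is no real obstacle here; the only point requiring a line of care is that one must pass between $\sum X_i^m$ and $(\sum X_i)^m$ \emph{with constants not depending on the point $x$} — this is exactly what the first part guarantees, the constants being functions of $l$ (here $l=\binom{n}{p}+1$ or $\binom{n}{p+1}+1$) and $m$ only — and that raising an equivalence $a\thickapprox b$ of non-negative functions to the power $m$, or taking $m$-th roots of $a^m\thickapprox b^m$, preserves $\thickapprox$, which is clear from monotonicity of $t\mapsto t^m$ and $t\mapsto t^{1/m}$ on $[0,\infty)$.
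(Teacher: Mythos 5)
Your proposal is correct; the paper states Lemma \ref{lemma1} without proof, and your argument supplies exactly the elementary reasoning the authors intend — indeed they already invoke the two-variable case $(x+y)^m\ge x^m+y^m\ge 2^{-m}(x+y)^m$ in the proof of Theorem \ref{equivKT}, and your identification $K_m\thickapprox (K_1)^m$, $T_m\thickapprox (T_1)^m$ with constants depending only on $n,p,m$, followed by taking $m$-th powers and roots, is the standard way to conclude. No gaps.
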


As a corollary of Lemma \ref{lemma1} and Corollary \ref{corollary1}, 
we have the following result.

\begin{cor}
Let $\s$ be a germ at $0$ of a closed set.
Let $r \in \N$, and let $f\in {\mathcal E}_{[r]}(n,p) $, $n\geq p$.
Suppose that $j^rf(\s,0)$ has a $C^\omega$ realisation.
Then the following conditions are equivalent:

\begin{enumerate} [(1)] 
\item There exists $m\in \mathbb{N}$ such that $I^T_{r}(m)$ holds
\item For all $m\in \mathbb{N}$, $I^T_{r}(m)$ holds
\item  There exists $m\in \mathbb{N}$ such that $I^K_{r}(m)$ holds
\item For all $m\in \mathbb{N}$, $I^K_{r}(m)$ holds
\end{enumerate}  
\end{cor}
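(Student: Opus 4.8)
The plan is to collapse all four conditions onto the single case $m=1$, which is exactly what Corollary \ref{corollary1} settles; the bridge is the multinomial power comparison of Lemma \ref{lemma1}, which lets me pass freely between the $m$-th power of the ``$m=1$'' quantity and the $m$-th quantity itself. Note that the reduction steps below require only that $T_m(f,\cdot)$ and $K_m(f,\cdot)$ be non-negative functions, so they are valid for the merely $C^r$ germ $f$; analyticity enters only at the very end, through Corollary \ref{corollary1}.

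First I would record two functional equivalences valid on a neighbourhood of $0\in\R^n$. Applying Lemma \ref{lemma1} to the non-negative summands of $T_1(f,x)$, namely the determinants $|\det(\cdots)|$ ranging over the $(p+1)$-tuples $i_1<\cdots<i_{p+1}$ together with $\|f(x)\|$, yields $(T_1(f,x))^m \thickapprox T_m(f,x)$. Applying it to the summands of $K_1(f,x)$, namely the terms $\|x\|\,|\det(\cdots)|$ over the $p$-tuples together with $\|f(x)\|$, yields $(K_1(f,x))^m \thickapprox K_m(f,x)$.

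Second, I would deduce that for each fixed $m$ the condition $I^T_{r}(m)$ is equivalent to $I^T_{r}(1)$, and likewise $I^K_{r}(m)\Leftrightarrow I^K_{r}(1)$. From $T_m\thickapprox (T_1)^m$, the inequality $T_m(f,x)\geq c\,(d(x,\s))^{rm}$ is equivalent, after adjusting the constant on a possibly smaller ball, to $(T_1(f,x))^m\geq c'\,(d(x,\s))^{rm}$; since both sides are non-negative, extracting $m$-th roots turns this into $T_1(f,x)\geq c''\,(d(x,\s))^r$, which is precisely $I^T_{r}(1)$, while raising $I^T_{r}(1)$ to the $m$-th power recovers $I^T_{r}(m)$. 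The identical argument with $K$ in place of $T$ gives $I^K_{r}(m)\Leftrightarrow I^K_{r}(1)$.

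Finally I would assemble the chain. The previous step identifies conditions (1) and (2) with the single statement $I^T_{r}(1)$, and conditions (3) and (4) with $I^K_{r}(1)$. Corollary \ref{corollary1} applied with $m=1$, which is where the $C^\omega$-realisation hypothesis and Theorem \ref{equivKT} are used, gives $I^T_{r}(1)\Leftrightarrow I^K_{r}(1)$, closing the four-way equivalence. I do not anticipate a genuine obstacle: the only delicate point is the root-extraction step, but because $m$ is a fixed positive integer the constants and the radius of the neighbourhood can be chosen uniformly straight from the definition of $\thickapprox$, so the whole argument is carried by Lemma \ref{lemma1} together with Corollary \ref{corollary1}.
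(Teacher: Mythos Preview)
Your proof is correct and follows precisely the route the paper intends: the corollary is stated in the paper simply as an immediate consequence of Lemma \ref{lemma1} and Corollary \ref{corollary1}, and your argument spells out exactly this, using $(T_1)^m\thickapprox T_m$ and $(K_1)^m\thickapprox K_m$ to reduce all four conditions to $m=1$ and then invoking Corollary \ref{corollary1}. There is nothing to add.
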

\begin{rmk}
It follows from the proof of Theorem \ref{equivKT} that the equivalence 
between  conditions $T_{m}$ and $K_{m}$ holds for any $ C^1$ map $f$ 
in a category where the analytic curve selection lemma is valid.
\end{rmk}


\subsection{Characterisations of finite $\s$-$SV$-determinacy} 

Let $\mathcal{E}(n)^p$, $n > p$, be the set of $C^{\infty}$ 
map-germs : $\R^n \to \R^p$ at $0 \in \R^n$, and let $\s$ be
a germ of closed subset of $\mathbb{R}^n$ such that $0\in \s.$
We say that $f,g\,\in {\mathcal E}_{[s]}(n,p)$ are $\s$-SV-{\em equivalent} 
if there is a local homeomorphism $\vp : (\R^n,0) \to (\R^n,0)$ fixing $\s$ 
such that $\vp (f^{-1}(0))=g^{-1}(0)$, and 
$f,g\,\in {\mathcal E}_{[s]}(n,p)$ are 
$\s$-V-{\em equivalent} if $f^{-1}(0)$ is homeomorphic to $g^{-1}(0)$ 
as germs at $0\in \mathbb{R}^n$ by a homeomorphism which fixes 
$f^{-1}(0)\cap \s$. 
Then $f \in \mathcal{E}(n)^p$ is called {\em finitely} 
$\s$-$SV$-{\em determined} (resp. {\em finitely} $\s$-$V$-{\em determined}) 
if there is a positive integer $k$
such that for any $g \in \mathcal{E}(n)^p$ with $j^k g(\s ;0) = 
j^k f(\s ;0),$ $g$ is $\s$-$SV$-equivalent (resp. $\s$-$V$-equivalent) to $f$.

Let $\varphi=(\varphi_{1},\ldots,\varphi_{p}): \R^n \to \R^p$, $n > p$,
be a $C^{\infty}$ map-germ at $0 \in \R^n$.
We denote by $I_{K}(\varphi)$ the ideal of ${\mathcal E}(n)$ generated by 
$ \varphi_1,\ldots, \varphi_p$ and the Jacobian determinants
$$
\dis \frac{D(\varphi_1,\ldots, \varphi_p)}{D(x_{i_1},\ldots, x_{i_p})}(x),
\ (1\leq i_1<\ldots<i_p\leq n). 
$$
We define also the ideal of ${\mathcal E}(n)$, 
denoted by $I_{T}(\varphi)$, generated by $ \varphi_1,\ldots, \varphi_p$ and 
the Jacobian determinants
$$
\dis \frac{D(\varphi_1,\ldots, \varphi_p,\rho)}
{D(x_{i_1},\ldots,x_{i_{p+1}})}(x), \ (1\leq i_1<\ldots<i_{p+1}\leq n)
$$
where $\rho(x)=\|x\|^2.$

Let ${\frak m}_{\Sigma}^\infty$ be the 
ideal of ${\mathcal E}(n)$ consisting of germs $f$ such that $j^\infty f(x)=0$ for all $x\in \Sigma$, namely 
$\dis 
{\frak m}_{\Sigma}^\infty = \{f\in {\mathcal E}(n):\, j^{\infty} f(\s ;0)=0\}.
$

\begin{defn} Let $I$ be an ideal of $ \mathcal{E}(n).$
We say that $I$ is {\it $\s$-elliptic} if there is $f\in I$ such that 
$\dis |f(x)|\geq Cd(x,\s)^\alpha$
in a neighbourhood of $0$, where $C$ and $\alpha$ are positive constants.
\end{defn} 

A germ of closed subset $\s$  of $ \mathbb{R}^n$ is called {\it coherent} if ${\frak m}_{\Sigma}$ is 
a finitely generated ideal of $ \mathcal{E}(n).$
Then we have the following characterisations of finite $\s$-$SV$-determinacy.

\begin{thm}\label{T1}(\cite{bekkakoike2}) Let $\s$ be a coherent  germ of closed subset 
of $\mathbb{R}^n$ such that $0\in \s.$
Then the following conditions are equivalent for 
$\varphi \in \mathcal{E}(n)^p$ where $n> p$:

\begin{enumerate}[(1)]
\item $\varphi$ is finitely $\Sigma$-SV-determined.
\item $\varphi$ is finitely $\Sigma$-V-determined.
\item  $I_{K}(\varphi)$ is $\s$-elliptic.
\item ${\frak m}_{\Sigma}^\infty \subset I_{K}(\varphi).$\\
If moreover $\varphi$ is analytic, they are also equivalent to:
\item ${\frak m}_{\Sigma}^\infty \subset I_{T}(\varphi).$

\end{enumerate}
\end{thm}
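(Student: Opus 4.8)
The plan is to close the cycle $(1)\Rightarrow(2)\Rightarrow(3)\Rightarrow(4)\Rightarrow(1)$ among the first four conditions and then, under the extra hypothesis that $\varphi$ is analytic, to attach $(5)$ by the equivalence $(4)\Leftrightarrow(5)$. Two of the links are immediate. For $(1)\Rightarrow(2)$ I would note that a $\Sigma$-$SV$-equivalence is an ambient homeomorphism fixing $\Sigma$ and carrying $\varphi^{-1}(0)$ onto $g^{-1}(0)$; its restriction to $\varphi^{-1}(0)$ is a homeomorphism onto $g^{-1}(0)$ fixing $\varphi^{-1}(0)\cap\Sigma$, so $\Sigma$-$SV$-equivalence implies $\Sigma$-$V$-equivalence, and the same determinacy exponent $k$ transfers. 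For $(3)\Rightarrow(4)$ I would use the $\Sigma$-elliptic element $f\in I_K(\varphi)$ with $|f(x)|\ge C\,d(x,\Sigma)^\alpha$: any $h\in\mathfrak{m}_\Sigma^\infty$ is flat along $\Sigma$, hence decays faster than every power of $d(x,\Sigma)$, so the quotient $h/f$ extends across $\Sigma$ to a $C^\infty$ germ by a Hestenes-type division lemma, giving $h=(h/f)\,f\in I_K(\varphi)$ and thus $\mathfrak{m}_\Sigma^\infty\subset I_K(\varphi)$.

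For the necessity link $(2)\Rightarrow(3)$ I would argue by contraposition in the spirit of the proof of the Main Theorem. If $I_K(\varphi)$ is not $\Sigma$-elliptic, then the generating function $\sum_i|\varphi_i|+\sum_J\bigl|\det\tfrac{D(\varphi_1,\dots,\varphi_p)}{D(x_{i_1},\dots,x_{i_p})}\bigr|$, comparable to $K_1(\varphi,\cdot)$ up to a power of $\|x\|$, satisfies no lower bound $c\,d(x,\Sigma)^\alpha$. The curve selection lemma then produces a real-analytic arc approaching $\Sigma$ along which this function is flat relative to $d(\cdot,\Sigma)$; along such an arc the gradient configuration of $\varphi$ degenerates, and I would exploit this degeneracy to manufacture a germ $h$, vanishing to arbitrarily high order $k$ on $\Sigma$, for which $g=\varphi+h$ changes the topological type of the germ $\varphi^{-1}(0)$. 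This contradicts finite $\Sigma$-$V$-determinacy, establishing $(2)\Rightarrow(3)$.

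The sufficiency link $(4)\Rightarrow(1)$ is the analytic core and the main obstacle. Given $g$ with $j^kg(\Sigma;0)=j^k\varphi(\Sigma;0)$, set $f_t=\varphi+t(g-\varphi)$ and $h=g-\varphi$, so $h$ is $k$-flat on $\Sigma$, and run the Thom--Mather homotopy method relative to $\Sigma$: one seeks a time-dependent vector field $\xi_t$ vanishing on $\Sigma$ whose flow fixes $\Sigma$ and trivializes the zero sets of $f_t$, which amounts to solving $\partial f_t/\partial t\equiv\xi_t\cdot\nabla f_t$ modulo the ideal generated by the components of $f_t$, i.e. to expressing $h$ through the generators of the Kuo ideal of $f_t$. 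It is precisely here that coherence of $\Sigma$ (finite generation of $\mathfrak{m}_\Sigma$) enters: an Artin--Rees/\L ojasiewicz argument upgrades the infinite-order inclusion $(4)$ to a finite bound $\mathfrak{m}_\Sigma^{\,k}\subset I_K(\varphi)$, and the $k$-flatness of $h$ then places $h$ in the required module so that $\xi_t$ can be solved for and integrated. Controlling $\xi_t$ near $\Sigma$, and verifying that its flow is a genuine homeomorphism fixing $\Sigma$, is the delicate step on which the whole theorem rests.

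Finally, under analyticity I would prove $(4)\Leftrightarrow(5)$. The implication $(5)\Rightarrow(4)$ is immediate and needs no analyticity: cofactor expansion of each $\rho$-augmented minor along the $\rho$-row writes it as an $\mathcal{E}(n)$-linear combination (with coefficients $2x_{i_\ell}$) of the ordinary $p\times p$ minors, whence $I_T(\varphi)\subset I_K(\varphi)$ and $(5)$ forces $(4)$. For $(4)\Rightarrow(5)$ I would pass through ellipticity: by the cycle, $(4)$ is equivalent to $(3)$, that is, to the lower bound $K_1(\varphi,x)\gtrsim d(x,\Sigma)^\alpha$ (the $\Sigma$-elliptic element of $I_K(\varphi)$ may be taken to be the sum of squares of its generators); since $\varphi$ is analytic, the Main Theorem (Theorem \ref{equivKT}) gives $K_1(\varphi,\cdot)\thickapprox T_1(\varphi,\cdot)$ for $m=1$ (cf. also Lemma \ref{lemma1}), so $T_1(\varphi,x)\gtrsim d(x,\Sigma)^{\alpha'}$ and $I_T(\varphi)$ is $\Sigma$-elliptic. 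The division argument of $(3)\Rightarrow(4)$, applied now to $I_T(\varphi)$, then yields $\mathfrak{m}_\Sigma^\infty\subset I_T(\varphi)$, which is $(5)$, completing the planned web of implications.
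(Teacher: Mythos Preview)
The paper does not actually prove this theorem. Immediately after the statement it only remarks that Theorem~\ref{equivKT} ``takes an important role in the proof'' and refers the reader to \S5 of \cite{bekkakoike2} for the detailed argument and further characterisations. There is therefore no in-paper proof to compare your proposal against.

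What can be said is that your outline is consistent with the one hint the paper gives: you invoke the Main Theorem $K_m\thickapprox T_m$ precisely at the link $(4)\Leftrightarrow(5)$ under analyticity, which is exactly where the authors indicate Theorem~\ref{equivKT} enters. The remaining links you sketch --- the trivial $(1)\Rightarrow(2)$, a Hestenes-type division for $(3)\Rightarrow(4)$, a curve-selection contraposition for $(2)\Rightarrow(3)$, and a relative Thom--Mather isotopy for $(4)\Rightarrow(1)$ --- are a plausible reconstruction of the strategy of \cite{bekkakoike2}, and you correctly flag $(4)\Rightarrow(1)$ as the delicate step; but none of this can be checked against the present paper. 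If you want a genuine comparison, the target is \S5 of \cite{bekkakoike2}.
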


Theorem \ref{equivKT} takes an important role in the proof of  the above theorem. 
For the detailed proof and more characterisations of finite $\s$-$SV$-determinacy, 
see \S 5 in \cite{bekkakoike2}.



\begin{thebibliography}{99}

\def\bitem#1{\bibitem[#1]{#1}}

\bibitem{bekkakoike1} Bekka, K., Koike, S. :
  \emph{The Kuo condition, an inequality of Thom's type and ($C$)-regularity,}
  Topology \textbf{37} (1998), 45--62.

\bibitem{bekkakoike2} Bekka. K., Koike, S. :
  \emph{Characterisations of $V$-sufficiency and $C^0$-sufficiency of 
  relative jets}, 
  arXiv.1703.07069v5 (2020).

\bibitem {bierstonemilman} Bierstone, E., Milman, P. D. :
 \textit{Arc-analytic functions}, Invent. Math. \textbf{101} (1990), 411--424.

\bibitem{bochnaklojasiewicz} Bochnak, J., Lojasiewicz, S. :  
  \emph{A converse of the Kuiper-Kuo theorem}, 
  Proc. of Liverpool Singularities  Symposium I (C.T.C. Wall, ed.), 
  Lectures Notes in Math. \textbf{192}, pp. 254--261, (Springer, 1971).

\bibitem{hironaka} Hironaka, H. : 
  \emph{Subanalytic sets,} in Number Theory, Algebraic Geometry
  and Commutative Algebra, in honor of Yasuo Akizuki, Kinokuniya, Tokyo, 1973, 
  pp. 453--493.

\bibitem{kuiper} Kuiper, N. :
  \emph{$C^1$-equivalence of functions near isolated 
  critical points}, Symp. Infinite Dimensional Topology,
  Princeton Univ. Press, Baton Rouge, 1967, R. D. Anderson ed.,
  Annales of Math. Studies \textbf{69} (1972), pp. 199--218.

\bibitem{kuo1} Kuo, T.-C. :
  \emph{On $C^0$-sufficiency of jets of potential functions,}
  Topology \textbf{8} (1969), 167--171.

\bibitem{kuo2} Kuo, T.-C. :
  \emph{A complete determination of $C^0$-sufficiency in $J^r(2,1)$,}
  Invent. math. \textbf{8} (1969), 226--235.

\bibitem{kuo3} Kuo, T.-C. :
  \emph{Characterizations of $v$-sufficiency of jets,}
  Topology \textbf{11} (1972), 115--131.

\bibitem{kurdyka} Kurdyka, K. : 
 \emph{Ensembles semi-alg\'ebriques sym\'etriques par arcs,}
 Math. Annalen, \textbf{282} (1988), 445--462.


\end{thebibliography}
\end{document}